\documentclass[reqno,oneside]{amsart}
\usepackage{graphicx} 

\usepackage[utf8]{inputenc}
\usepackage[super]{nth}
\usepackage{xcolor}
\usepackage{comment,fancyhdr,tikz,amsmath,amssymb,amsthm,mathtools,centernot,caption,float,graphicx,makecell,array,float,changepage,verbatim,textcomp,parallel,geometry,ragged2e,xcolor,lastpage,bbm,mathrsfs,pgfkeys,mathabx}
\newcommand\join\vee
\newcommand\meet\wedge

\usepackage[
    cal=boondoxo,
    scr=boondox,
    frak=euler,
    bb=ncmbbr]{mathalpha}
\usepackage{bm}

    \usepackage[shortlabels]{enumitem}

\usepackage[export]{adjustbox}
\usepackage[
    colorlinks=false,
    citebordercolor=green,
    linkbordercolor=red
    ]{hyperref}

    \usepackage[capitalise]{cleveref}
    \crefformat{equation}{(#2#1#3)}
    \crefformat{enumi}{(#2#1#3)}
    \crefformat{section}{\S#2#1#3}
    \crefformat{subsection}{\S#2#1#3}
    \crefformat{subsubsection}{\S#2#1#3}
    \numberwithin{equation}{section}
    \newcommand{\crefdefpart}[2]{%
        \hyperref[#2]{\namecref{#1}~\labelcref*{#1}~\ref*{#2}}}
    
    \let\etoolboxforlistloop\forlistloop 
    \usepackage{autonum}
    \let\forlistloop\etoolboxforlistloop 
    \makeatletter
    \newcommand{\blx@noerroretextools}{}
    \makeatother
    
    \usepackage[
        backend=biber,
        style=alphabetic,
        maxbibnames=50,
        maxalphanames=50,
        maxcitenames=50
        ]{biblatex}
    \newcommand{\Z}{\mathbb{Z}}
    
    \newcommand{\R}{\mathbb{R}}
    \newcommand{\C}{\mathbb{C}}
    
    \def\sph^#1{\mathbb S^{#1}}
    \newcommand\p[1]{\left(#1\right)}
    \newcommand\abs[1]{\left|#1\right|}

    \newcommand\jp[1]{\left\langle#1\right\rangle}
    
    \newcommand\eps\varepsilon
    \newcommand\pji\varphi
    \newcommand\sig\varsigma

    \newcommand\beq{\begin{equation}}
    \newcommand\eeq{\end{equation}}
    \newcommand\inv{^{-1}{}}

    \def\XXint#1#2#3{{\setbox0=\hbox{$#1{#2#3}{\int}$ }
    \vcenter{\hbox{$#2#3$ }}\kern-.6\wd0}}

    \newtheoremstyle{thmst}
        {5pt}
        {5pt}
        {}
        {}
        {\bfseries}
        {.}
        {.5em}
        {}%
    \newtheoremstyle{rmkst}
        {5pt}
        {5pt}
        {}
        {}
        {\bfseries}
        {.}
        {.5em}
        {}%
    \newtheoremstyle{lmst}
        {5pt}
        {5pt}
        {}
        {}
        {\bfseries}
        {.}
        {5pt plus 1pt minus 1pt}
        {}%
    \newtheoremstyle{prpst}
        {5pt}
        {5pt}
        {}
        {}
        {\bfseries}
        {.}
        {.5em}
        {}%
    \newtheoremstyle{defst}
        {5pt}
        {5pt}
        {}
        {}
        {\bfseries}
        {.}
        {.5em}
        {}%
    \newtheoremstyle{wlst}
        {5pt}
        {5pt}
        {}
        {}
        {\bfseries}
        {.}
        {.5em}
        {}%
    \newtheoremstyle{asmpst}
        {5pt}
        {5pt}
        {}
        {}
        {\bfseries}
        {.}
        {.5em}
        {}%
        
    \theoremstyle{thmst}
        \newtheorem{theorem}{Theorem}[section]

        \crefname{conjecture}{Conjecture}{Conjectures}
    \theoremstyle{rmkst}
        \newtheorem{remark}[theorem]{Remark}
    \theoremstyle{defst}
        \newtheorem{definition}[theorem]{Definition}
    \theoremstyle{defst}
        
    \theoremstyle{lmst}
        \newtheorem{lemma}[theorem]{Lemma}
        
    \theoremstyle{prpst}
        
    \theoremstyle{wlst}
        
    \theoremstyle{asmpst}
        
    \theoremstyle{thmst}

        \newlist{defenum}{enumerate}{1} 
        \setlist[defenum]{label=(\roman*),ref=\thedefinition\,(\roman*)}
        \crefname{defenumi}{Definition}{Definitions}
        
        \newlist{lemenum}{enumerate}{1} 
        \setlist[lemenum]{label=(\roman*),ref=\thelemma\,(\roman*)}
        \crefname{lemenumi}{Lemma}{Lemmas}
        
        \newlist{corenum}{enumerate}{1} 
        \setlist[corenum]{label=(\roman*),ref=\thecorollary\,(\roman*)}
        \crefname{corenumi}{Corollary}{Corollaries}
        
        \newlist{thmenum}{enumerate}{1} 
        \setlist[thmenum]{label=(\roman*),ref=\thetheorem\,(\roman*)}
        \crefname{thmenumi}{Theorem}{Theorems}
        
        \newlist{rmkenum}{enumerate}{1} 
        \setlist[rmkenum]{label=(\roman*),ref=\theremark\,(\roman*)}
        \crefname{rmkenumi}{Remark}{Remarks}
        
        \newlist{prpenum}{enumerate}{1} 
        \setlist[prpenum]{label=(\roman*),ref=\theproposition\,(\roman*)}
        \crefname{prpenumi}{Proposition}{Propositions}
        
        \newlist{axenum}{enumerate}{1} 
        \setlist[axenum]{label=(\roman*),ref=\theaxiom\,(\roman*)}
        \crefname{axenumi}{Axiom}{Axioms}
        
        \newlist{defcrit}{enumerate}{1} 
        \setlist[defcrit]{label=(\roman*),ref=\thedefinition\,(\roman*)}
        \crefname{defcriti}{Definition}{Definitions}
        
        \newlist{lemcrit}{enumerate}{1} 
        \setlist[lemcrit]{label=(\alph*),ref=\thelemma\,(\alph*)}
        \crefname{lemcriti}{Lemma}{Lemmas}
        
        \newlist{thmcrit}{enumerate}{1} 
        \setlist[thmcrit]{label=(\alph*),ref=\thetheorem\,(\alph*)}
        \crefname{thmcriti}{theorem}{theorems}
        
        \newlist{rmkcrit}{enumerate}{1} 
        \setlist[rmkcrit]{label=(\alph*),ref=\theremark\,(\alph*)}
        \crefname{rmkcriti}{remark}{remarks}
        
        \newlist{prpcrit}{enumerate}{1} 
        \setlist[prpcrit]{label=(\alph*),ref=\theproposition\,(\alph*)}
        \crefname{prpcriti}{proposition}{propositions}

        \makeatletter
        \let\ifnc\@ifnextchar
        \makeatother

            \reversemarginpar
            \def\redit {\marginpar{\raggedleft{See edit $\implies$}}\color{red}}
            \def\rs#1.{\redit #1.\color{black}}
            \def\rsm#1.{{\color{red} #1.}}
            
            \def\bedit {\marginpar{\raggedleft{See edit $\implies$}}\color{blue}}
            \def\bs#1.{\bedit #1.\color{black}}
            \def\bsm#1.{{\color{blue} #1.}}

            \def\ind#1_#2{\left\{#1_{#2}\right\}}
            \def\<#1>{\jp{#1}}
            \def\-#1/{{}_{#1}}
            \makeatletter
                \newcommand\pl@write[3]{%
                    \left\|#1\right\|_{L^{#2}#3}%
                    }
                \def\pl #1__#2{%
                    \def\pl@arg@i{#1}%
                    \def\pl@arg@ii{#2}%
                    \def\pl@arg@iii{\alpha}%
                    \futurelet\next\pl@eval%
                    }
                \def\pl@eval{%
                    \ifx\next\bgroup%
                            \expandafter\pl@eval@iii%
                        \else%
                            \expandafter\pl@eval@ii%
                        \fi%
                    }
                \def\pl@eval@iii#1{%
                    \pl@write\pl@arg@i\pl@arg@ii{\p{#1}}%
                    }
                \def\pl@eval@ii{%
                    \pl@write\pl@arg@i\pl@arg@ii{}%
                    }
            \makeatother
            \makeatletter
                \newcommand\ef@write[3]{%
                    ^{#1\frac{#2}{#3}}{}%
                    }
                \newcommand\Ef@write[3]{%
                    ^{#1#2/#3}{}%
                    }
                \def\ef #1{%
                    \def\ef@arg@i{}%
                    \def\ef@arg@ii{#1}%
                    \futurelet\next\ef@eval%
                    }
                \def\ief #1{%
                    \def\ef@arg@i{-}%
                    \def\ef@arg@ii{#1}%
                    \futurelet\next\ef@eval%
                    }
                \def\ef@eval{%
                    \ifx\next/%
                            \expandafter\ef@eval@iii%
                        \else%
                            \expandafter\ef@eval@ii%
                        \fi%
                    }
                \def\ef@eval@iii/{%
                    \expandafter\ef@eval@v%
                    }
                \def\ef@eval@v#1{%
                    \Ef@write\ef@arg@i\ef@arg@ii{#1}}
                \def\ef@eval@ii{%
                    \expandafter\ef@eval@iv%
                    }
                \def\ef@eval@iv#1{%
                    \ef@write\ef@arg@i\ef@arg@ii{#1}%
                    }
            \def\e@writep #1{%
                ^{+#1}{}%
                }
            \def\e@writen #1{%
                ^{-#1}{}%
                }
            \def\e #1{%
                \ifx#1-%
                        \expandafter\e@writen%
                    \else%
                        \ifx#1+%
                                \expandafter\e@writep%
                            \else%
                                ^{#1}{}%
                            \fi%
                    \fi%
                }
            \def\e@2 {%
                }

            \makeatother
            \makeatletter
            \newcommand{\undersim}[1]{\mathrel{\mathpalette\@undersim{#1}}}
            \newcommand{\@undersim}[2]{%
              \vcenter{%
                \ialign{%
                  ##\cr
                  $\m@th#1#2$\cr
                  \noalign{\nointerlineskip\kern.2ex}
                  $\m@th#1\sim$\cr
                  \noalign{\kern-.4ex}
                }%
              }%
            }
            \makeatother

    \makeatletter
    
    \def\col #1{%
        \def\col@arg{#1}%
        \begin{bmatrix}\col@i\end{bmatrix}%
        }
    \def\col@i{%
        \expandafter\col@ii\col@arg,\col@end%
        }
    \def\col@ii #1,{%
        #1%
        \futurelet\next\col@iii%
        }
    \def\col@iii{%
        \ifx\next\col@end%
                {}%
            \else%
                \\\col@ii%
            \fi%
        }
    \def\col@end{%
        {}%
        }
    \makeatother
\makeatletter\@ifundefined{remark}{\theoremstyle{remark}\newtheorem*{remark}{Remark}}{}\makeatother
\title[A null version of Anderson's rigidity theorem]{On the causality of gravitational pp-wave spacetimes: A null version of Anderson's rigidity theorem
}

\author{Hannah Cairo}
\date{September 2026}
\begin{document}
    \let\startmathold\[
    \let\endmathold\]
    \def\[{\futurelet\next\testingforaline}
    \def\aline{hello world}
    \def\testingforaline{\ifx\next\aline\expandafter\alining\else\startmathold\fi}
    \def\alining\aline#1\]{\begin{align}#1\end{align}}
\begin{abstract}
    We prove that plane waves are the only gravitational pp-wave spacetimes which are distinguishing; in particular, they are the only strongly causal ones. This is used to prove a null version of Anderson's rigidity theorem, conjectured by Costa e Silva and Flores: every geodesically complete, transversally complete, strongly causal, Ricci-flat Brinkmann spacetime of dimension $4$ is isometric to a quotient of a plane wave spacetime.
\end{abstract}
\maketitle
\section{Introduction}\label{s-introduction}
A pp-wave, i.e. a ``plane-fronted wave with parallel rays'', is a spacetime $M=(\R^4,g)$ so that in some coordinates $(u,v,x,y)$, we have
    \[\label{e-pp-wave}
        g=2dudv-H(u,x,y)du^2+dx^2+dy^2
        \]
for a smooth function $H:\R^3\to\R$. Such spacetimes were first studied by Brinkmann \cite{Brinkmann_1925} and model radiation traveling at the speed of light. When $H(u,\cdot)$ is harmonic in $(x,y)$ for every $u$, the spacetime is Ricci-flat and satisfies the vacuum Einstein field equations; such spacetimes are known as \textit{gravitational pp-waves}. When $H(u,\cdot)$ is a polynomial of degree $\leq2$ in $(x,y)$ for every $u$, $(M,g)$ is called a \textit{plane wave}. (By \cref{r-causality}(iii) below, a gravitational pp-wave which is isometric to a plane wave is itself a plane wave in this sense, so the notion does not depend on the choice of the coordinates \cref{e-pp-wave}.) The reader is encouraged to read the article \cite{Roche_2023} for a thorough discussion of pp-waves and their history.

The first result of this paper is the complete classification of the causality of gravitational pp-waves. That is, we show that plane waves are the only gravitational pp-waves which are distinguishing:
    \begin{theorem}[Causality of gravitational pp-waves]\label{t-not-distinguishing}
        Let $(M,g)$ be a gravitational pp-wave which is not a plane wave. Then $(M,g)$ is neither future-distinguishing nor past-distinguishing. Consequently, a gravitational pp-wave is distinguishing if and only if it is a plane wave.
        \end{theorem}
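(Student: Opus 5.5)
The plan is to produce, at a suitable value $u_0$, two distinct points on the same null generator $\partial_v$ whose chronological futures coincide. A symmetric argument gives coinciding pasts. The second sentence of \cref{t-not-distinguishing} will then follow from the first together with the known fact that plane waves are strongly causal, hence distinguishing; I will treat that fact as standard, and it is presumably what \cref{r-causality} records.

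\textbf{Step 1: causal curves.} For a curve $\gamma(s)=(s,v(s),x(s))$ parametrised by $u$, where $x=(x,y)$, the condition $g(\dot\gamma,\dot\gamma)<0$ reads
\[
\dot v<\tfrac12\bigl(H(s,x(s))-|\dot x(s)|^2\bigr).
\]
Any $C^1$ choice of $x(\cdot)$ is therefore allowed, and $v$ can always be decreased at will. The net $v$-gain available over $[u_1,u_2]$ along $x(\cdot)$ is
\[
\tfrac12\int_{u_1}^{u_2}\bigl(H-|\dot x|^2\bigr)\,ds .
\]

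\textbf{Step 2: reduction to an unbounded gain.} Let $p=(u_0,v_0,x_0)$ and $q=(u_0,v_0+1,x_0)$. Since $q$ lies on the null $\partial_v$-line through $p$, we have $q\in J^+(p)$ and hence $I^+(q)\subset I^+(p)$. For the reverse inclusion it suffices to show the following. For every $\delta>0$ and every $A>0$ there is a future timelike curve from $(u_0,v_0+1,x_0)$ to $(u_0+\delta,v,x_0)$ with $v\ge v_0+A$. Given $r\in I^+(p)$, with timelike curve $\sigma$ from $p$, I would take $\delta$ small, travel from $q$ to a point high above $\sigma(u_0+\delta)$ on the same $\partial_v$-line, and then follow $\sigma$ shifted in $v$ (shifting is an isometry). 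This puts $r$ in $I^+(q)$.

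\textbf{Step 3: superquadratic growth, uniformly in $u$.} This is the main obstacle. The set of $u$ at which $H(u,\cdot)$ is not a polynomial of degree $\le2$ is nonempty and open, because it is where some third $(x,y)$-derivative is nonzero. Fix $u_0$ in it and a small interval $J\ni u_0$. I would expand $H(u,re^{i\theta})=\sum_k r^k\bigl(a_k(u)\cos k\theta+b_k(u)\sin k\theta\bigr)$, whose coefficients are continuous in $u$. Some $(a_k,b_k)$ with $k\ge3$ is nonzero at $u_0$, and hence on $J$ after shrinking. Using $\int_0^{2\pi}H\,d\theta=2\pi H(u,0)$ together with $\int_0^{2\pi}|H|\,d\theta\gtrsim r^k|(a_k,b_k)|$, the positive part satisfies $\int H^+\,d\theta\gtrsim r^k$. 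The difficulty is to get a \emph{pointwise} lower bound on a region that moves continuously in $u$.

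I would first try to choose $k$ maximal among the finitely many modes up to a large cutoff, handle the tail using Cauchy estimates on a slightly larger circle, and pass to a sequence $r_n\to\infty$ along which mode $k$ dominates on $J$. Then $H(u,r_ne^{i\theta})\ge c\,r_n^k$ on an arc around an angle $\varphi(u)$ that depends continuously on $u$, namely a phase of $(a_k,b_k)(u)$, and I can take it smooth after mollification.

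\textbf{Step 4: the curve.} With the arc in hand, I would build the curve in three stages, using a scale $\tau\le\delta/3$ with $\tau$ fixed small so that $|\dot\varphi|$ is bounded on the relevant interval:
\begin{enumerate}[(a)]
\item move radially from $x_0$ to $r_ne^{i\varphi}$ in time $\tau$, at cost $O(r_n^2/\tau)$;
\item track $r_ne^{i\varphi(s)}$ for time $\tau$, gaining at least $c\,r_n^k\tau$ at angular cost $O(r_n^2\|\dot\varphi\|^2\tau)$;
\item return to $x_0$ in time $\tau$.
\end{enumerate}
During stages (a) and (c), $H$ may be very negative along the radial segment, of size up to $O(r_n^k)$. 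To avoid this I would let the radial passes occur along a ray where $H\ge0$ when possible, or else shrink their duration to $\tau'\ll\tau$, so that the loss $O(r_n^2/\tau')+O(r_n^k\tau')$ stays below the gain $c\,r_n^k\tau$. Since $k\ge3$, choosing $\tau'=\epsilon\tau$ with $\epsilon$ small and then $r_n\to\infty$ makes the net gain tend to $+\infty$. This proves the claim of Step 2 and hence \cref{t-not-distinguishing}.
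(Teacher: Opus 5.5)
The gap is in Steps 3--4, exactly at the point you flag, and the mechanism you propose cannot close it. Take $x_0=0$ and write $H=\Re\mathcal f$ with $\mathcal f(u;z)=\sum_m c_m(u)z^m$.

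By Parseval on $|z|=r$, $\frac1{2\pi}\int_0^{2\pi}H(u,re^{i\theta})^2\,d\theta\ge\frac12|c_m(u)|^2r^{2m}$ for every $m\ge1$. Domination by a fixed mode $k$ along a sequence $r_n\to\infty$ implies $|H(u,\cdot)|\lesssim r_n^k$ on $|z|=r_n$. That bound forces $c_m(u)=0$ for all $m>k$, so it holds only when $H(u,\cdot)$ is a polynomial.

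For transcendental $H(u,\cdot)$ such as $\Re(e^{z})$, your proposed rescues do not work:
\begin{itemize}
\item Cauchy estimates on $|z|=(1+\epsilon)r$ control the tail only beyond a cutoff that grows with $r$. The modes between $k$ and that cutoff swamp mode $k$.
\item If you let the dominant index depend on $r$ and $u$, the arcs where $H(u,re^{i\theta})$ is large and positive generally shrink as $r\to\infty$ and can move or jump as $u$ varies. Nothing then gives you a continuous $\varphi(u)$ with $|\dot\varphi|$ bounded uniformly in $n$.
\item On the radial legs, $H$ can be negative with size exponentially larger than $r_n^k$ (for $\Re(e^z)$, along rays with $0<\cos\theta<1$). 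Your estimate of the loss there as $O(r_n^k\tau')$ is therefore unjustified.
\end{itemize}
As written, Steps 3--4 handle essentially only the case in which $H(u,\cdot)$ is a polynomial on $J$.

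The missing idea is that no pointwise control in $u$ is needed. You already have the right inequality: the mean over $\theta$ is $O(1)$ while the $L^1$ norm dominates every Fourier coefficient, so the positive part is large. But you apply it to circles at fixed $u$ and then have to glue good angles across $u$. Apply it after integrating in $u$ instead.

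Fix a closed profile $\gamma$ on $[u_0,u_0+\delta]$ with $\gamma(u_0)=\gamma(u_0+\delta)=0$, and rotate the \emph{whole} curve by a single angle, $\theta\mapsto\mathcal H(Re^{i\theta}\gamma)$. By Cauchy's formula, its $n$th Fourier coefficient is $\frac12R^n\int c_n\gamma^n\,du$. Its mean is $\Re\int c_0\,du$, which is independent of $R$. Take $\gamma$ to be a nonnegative real bump supported where $c_n$ stays close to a nonzero value; then that coefficient is nonzero. So some $\theta$ gives $\mathcal H\gtrsim R^n$, against $\mathcal K=R^2\mathcal K(\gamma)$. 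This accounts for the entire curve, radial legs included, and it is the paper's \cref{causality-main-lemma}.

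Separately, Step 2 has the time orientation reversed. Future timelike curves satisfy $\dot v<\frac12(H-|\dot x|^2)$, so $\partial_v$ is past-directed and $q\in J^-(p)$. The trivial inclusion is therefore $I^+(p)\subseteq I^+(q)$, and the one needing the gain is $I^+(q)\subseteq I^+(p)$. Your concatenation also mismatches endpoints, since $\sigma(u_0+\delta)$ need not lie over $x_0$.

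Both problems are repaired as follows. For $w\in I^+(q)$, the open set $I^-(w)$ contains $(u_0+\delta,v_0+1,x_0)$ for small $\delta$. The gain lemma together with \cref{causality-lemma} places this point in $I^+(p)$. With that, your $I^+(p)=I^+(q)$ route is a valid alternative to the paper's curve that leaves $U$ and returns to $V$.
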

pp-waves are trivially causal, and the classification of the causality of gravitational plane waves was completed in 1992 by Ehrlich and Emch \cite{1992RvMaP...4..163E}: non-flat gravitational plane waves are causally continuous but not causally simple. See \cref{ss-related-work} for a discussion of the literature, and in particular of \cite{Flores_Sanchez_2003} and \cite{silva2016rigiditygeodesiccompletenessbrinkmann}, where non-distinguishing examples, and the failure of strong causality for $u$-independent $H$, were found previously in certain cases. The general case remained open until now.

Our second result concerns Brinkmann spacetimes, i.e. connected time-oriented Lorentzian manifolds $(M,g)$ admitting a complete parallel null vector field $X$; pp-waves are the special case $X=\partial_v$. Following \cite{silva2016rigiditygeodesiccompletenessbrinkmann}, we say that a Brinkmann spacetime is \textit{transversally complete} if there is a complete vector field $Y$ with $g(X,Y)=1$ and $[X,Y]=0$. Anderson \cite{Anderson_2000} proved that every geodesically complete, chronological, Ricci-flat stationary spacetime of dimension $4$ is isometric to a quotient of Minkowski space. Regarding Brinkmann spacetimes as null analogues of stationary spacetimes, Costa e Silva and Flores conjectured a null version of this theorem: every strongly causal, Ricci-flat Brinkmann spacetime of dimension $4$ satisfying a suitable completeness condition should be isometric to a quotient of a plane wave (\cite[Conjecture~VI.2]{silva2016splittingproblemlorentzianmanifolds}, \cite[Conjecture~1.1]{silva2016rigiditygeodesiccompletenessbrinkmann}). Together with Herrera, they proved this, with geodesic completeness as the completeness condition, under the additional assumption that the field $Y$ conjugate to $X$ can be chosen to be a Killing field \cite[Theorem~1.2]{silva2016rigiditygeodesiccompletenessbrinkmann}; they also observed that, if the completeness condition is taken to be geodesic completeness together with transversal completeness, then the conjecture reduces to the Ehlers--Kundt conjecture \cite{Ehlers_Kundt_1962}, which asserts that every geodesically complete gravitational pp-wave is a plane wave. The Ehlers--Kundt conjecture is false \cite{Cairo_EK}; nevertheless, the null version of Anderson's theorem is true, because the causality hypothesis alone already forces a gravitational pp-wave to be a plane wave. From \cref{t-not-distinguishing} we deduce:
    \begin{theorem}[Null version of Anderson's rigidity theorem]\label{t-conj-flores-silva}
        Every geodesically complete, transversally complete, strongly causal, Ricci-flat Brinkmann spacetime of dimension $4$ is isometric to a quotient of a plane wave spacetime.
    \end{theorem}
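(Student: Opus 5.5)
The plan is to pass to the universal cover, identify it with a global gravitational pp-wave, and then apply \cref{t-not-distinguishing} upstairs.

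Let $(M,g)$ be as in the statement, with parallel null field $X$ and complete conjugate field $Y$ satisfying $g(X,Y)=1$ and $[X,Y]=0$. Let $\pi:\tilde M\to M$ be the universal cover, and lift $g$, $X$, $Y$ to $\tilde g$, $\tilde X$, $\tilde Y$. All hypotheses except strong causality lift directly: $\tilde M$ is geodesically complete, Ricci-flat and Brinkmann, and $\tilde X,\tilde Y$ are complete, commuting, with $\tilde g(\tilde X,\tilde Y)=1$. The first step is the structural result of Costa e Silva--Flores--Herrera \cite{silva2016rigiditygeodesiccompletenessbrinkmann}. As they observed, under geodesic and transversal completeness in dimension $4$, the universal cover of a Ricci-flat Brinkmann spacetime is globally isometric to a pp-wave $(\R^4,2dudv-H\,du^2+dx^2+dy^2)$ with $H(u,\cdot)$ harmonic; this is precisely their reduction to the Ehlers--Kundt conjecture. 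If this must be reproved, I would argue as follows. Since $X^\flat$ is parallel, it is closed, so on $\tilde M$ it equals $du$ for a function $u$. The level sets of $u$ are totally geodesic and foliated by the integral lines of $X$. The flows of the commuting complete fields $\tilde X,\tilde Y$ generate an $\R^2$-action, and the quotient of a level set of $u$ by the $X$-lines is a complete flat Riemannian surface, simply connected upstairs, hence $\R^2$. Ricci-flatness together with dimension $4$ forces the transverse metric to be flat. From this one builds global coordinates $(u,v,x,y)$ putting $\tilde g$ in the form \cref{e-pp-wave}, with $\Delta_{x,y}H=0$.

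The second step is to show that $\tilde M$ is strongly causal, or at least distinguishing. The point is that strong causality lifts to coverings: a Riemannian covering argument shows that causal curves in $\tilde M$ project to causal curves in $M$. Concretely, take $p\in\tilde M$ and a neighbourhood $U$ of $\pi(p)$ that is causally convex in $M$ and evenly covered. Let $\tilde U$ be the sheet of $\pi^{-1}(U)$ containing $p$. A causal curve leaving and re-entering $\tilde U$ projects to one leaving and re-entering $U$, or else it stays in $U$ but ends on another sheet. The latter is impossible, since a curve remaining inside $U$ lifts inside the single sheet $\tilde U$. Hence $\tilde M$ is strongly causal, and in particular distinguishing.

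By \cref{t-not-distinguishing}, $\tilde M$ is therefore a plane wave. Since $M=\tilde M/\Gamma$ with $\Gamma$ the deck group acting freely and properly by isometries, $M$ is isometric to a quotient of a plane wave spacetime.

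The main obstacle is the first step: producing \emph{global} pp-wave coordinates on $\tilde M$ from the completeness assumptions. I would rely on the cited work for it, and check only that their construction uses exactly geodesic completeness plus transversal completeness, and that Ricci-flatness gives harmonicity of $H(u,\cdot)$.
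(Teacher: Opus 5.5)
Your proposal is correct and follows essentially the same route as the paper: lift to the universal cover, show that strong causality lifts via the evenly-covered-sheet and connectedness argument, use Costa e Silva--Flores--Herrera to identify $\tilde M$ with a gravitational pp-wave on $\R^4$, apply \cref{t-not-distinguishing}, and descend by the deck group. One correction to how you describe the cited input: the splitting step \cite[Theorem~2.1]{silva2016rigiditygeodesiccompletenessbrinkmann} is applied to $M$ using its strong causality as well as transversal completeness, with geodesic completeness and Ricci-flatness entering only afterwards through \cite[Theorem~3.1]{silva2016rigiditygeodesiccompletenessbrinkmann}; this is harmless since strong causality is assumed, but your sketched reproof takes exactly this point for granted when it treats the space of $X$-lines in a level set of $u$ as a manifold.
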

See \cref{s-causality-theorem} for the proof of \cref{t-not-distinguishing}, and \cref{ss-anderson} for the proof of \cref{t-conj-flores-silva}. Geodesic completeness enters the latter only through a theorem of \cite{silva2016rigiditygeodesiccompletenessbrinkmann} which identifies the universal cover with a pp-wave; see \cref{r-completeness}.

\Cref{t-not-distinguishing} may also be read as recovering something of what Ehlers and Kundt presumably had in mind. Their conjecture was that a gravitational pp-wave which is physically reasonable, with geodesic completeness serving as the measure of physical reasonableness, must be a plane wave. With that measure the conjecture fails \cite{Cairo_EK}; but if physical reasonableness is measured instead by causality, then the corresponding statement is true, with no completeness assumption at all: a gravitational pp-wave which is distinguishing (let alone strongly causal) is a plane wave. Whether the distinguishing property should itself be counted as a physical requirement is not beyond debate. Hubeny, Rangamani and Ross \cite{Hubeny_Rangamani_Ross_2005} have argued, using examples from string theory, that spacetimes which are merely causal, and hence arbitrarily close to admitting closed causal curves, can still be physically relevant; the non-plane gravitational pp-waves of \cref{t-not-distinguishing} are all of this kind.
\subsection{Relation to previous work}\label{ss-related-work}
The causality of pp-waves has been studied extensively; see \cite{Flores-on-the-geometry-of-pp-wave-spacetimes,Roche_2023} and the review \cite{GarciaParrado_Senovilla_2005}. Penrose \cite{Penrose1965} observed that plane waves are not globally hyperbolic, and Ehrlich and Emch \cite{1992RvMaP...4..163E} determined their position in the causal ladder. For general pp-wave type spacetimes, Flores and S\'anchez \cite{Flores_Sanchez_2003} showed that the growth of $H$ at spatial infinity is decisive (we state their results in our sign convention, $g=2dudv-Hdu^2+dx^2+dy^2$, in which our $H$ is their $-H$): if $H(u,\cdot)$ is bounded above by a subquadratic function $R_1(u)|x|^{2-\eps}$ at infinity (and the wave fronts are complete, as they are here), then the spacetime is globally hyperbolic; if $H(u,\cdot)\leq R_1(u)|x|^2$ at infinity, then it is strongly causal; and these spacetimes are always causal, but superquadratic growth, even of order $|x|^{2+\eps}$, may destroy strong causality or even the distinguishing property (see also \cite{Hubeny_Rangamani_2002,Flores_Sanchez_2008}). That some pp-waves fail to be distinguishing was also noticed in \cite{Hubeny_Rangamani_Ross_2004}.

Minguzzi \cite{Minguzzi_2012} translated the causal ladder of spacetimes admitting a parallel null vector into properties of the least action of the associated classical mechanical system (in our notation, the action of a curve $\gamma$ is $\tfrac12\p{\mathcal K(\gamma)-\mathcal H(\gamma)}$; cf.\ \cref{causality-lemma} below). In particular, he showed that for such spacetimes strong causality is equivalent to stable causality, and to the lower semicontinuity of the least action on the diagonal, and he gave examples in which the distinguishing property fails. In the gravitational case, none of these results apply to non-plane waves; harmonicity implies that $H(u,\cdot)$ is either a polynomial of degree $\leq2$ or not bounded above or below by any quadratic polynomial.

Closest to \cref{t-not-distinguishing} is the work of Costa e Silva, Flores and Herrera \cite{silva2016rigiditygeodesiccompletenessbrinkmann}: the proof of their Theorem~1.2 shows that a gravitational pp-wave with $u$-independent, non-quadratic $H$ is not strongly causal. \Cref{t-not-distinguishing} removes the assumption that $H$ is independent of $u$ and strengthens the conclusion from the failure of strong causality to the failure of the (weaker) distinguishing condition. Our proof of \cref{causality-main-lemma} is also different: instead of estimating the integral of $H$ along radial paths, we consider it as a function of the direction $\theta$ of the excursion and use its Fourier expansion in $\theta$, which allows $H$ to vary with $u$.
 
One consequence is worth making explicit. By \cref{t-not-distinguishing}, a gravitational pp-wave which is distinguishing (in particular, one which is strongly causal) is a plane wave, \textit{with no completeness assumption}. Hence within the class of pp-waves, the causality hypothesis in \cite{silva2016rigiditygeodesiccompletenessbrinkmann} and in the conjecture of \cite{silva2016splittingproblemlorentzianmanifolds} already suffices on its own; conversely, every counterexample to the Ehlers--Kundt conjecture, such as the ones constructed in \cite{Cairo_EK}, necessarily fails to be distinguishing. The counterexamples of \cite{Cairo_EK} have $u$-independent $H$, so that $\partial_u$ is a Killing field and they are transversally Killing in the sense of \cite{silva2016rigiditygeodesiccompletenessbrinkmann}; they are consistent with \cite[Theorem~1.2]{silva2016rigiditygeodesiccompletenessbrinkmann} only because they fail to be strongly causal.

\subsection{Statement on AI usage}
All ideas in this paper are of human origin. The author used Claude Fable 5.1 and Claude Sonnet 5 to generate figures, check proofs, search the literature, edit, and write parts of this paper.

\subsection{Notation}\label{ss-notation}
Throughout the proof, we will make use of many different constants that depend on each other in different ways.
    \begin{definition}~\\
        \begin{itemize}
        \item[$(\lesssim)$] Suppose that $A,B>0$ are two positive quantities. If there exists some constant $C$ so that $A\leq CB$, then we write $A\lesssim B$ or $B\gtrsim A$. If $A\lesssim B$ and $B\lesssim A$, then we write $A\sim B$. If the constant in $A\leq CB$ depends on other parameters, e.g. $D$, then we will display this dependence as $A\lesssim_DB$.
        \item[$(\ll)$] Let $P(x,y)$ be a proposition about two positive quantities $x,y$. Suppose that there exists some constant $c>0$ such that $x\leq cy\implies P(x,y)$. Then we write this statement as follows:
            \begin{enumerate}
                \item[$(\bullet)$] If $x\ll y$, then $P(x,y)$.
                \end{enumerate}
            We define $\gg$ similarly (i.e. $x\gg y$ means $y\ll x$). When we are dealing with an arbitrarily small positive number $x$, we often write $x\ll1$. We also write $x\ll0$ to mean that $x\leq-C$ for some sufficiently large constant $C$; that is, $x$ is a large negative number, and similarly $x\gg0$ to mean that $x\geq C$ for some sufficiently large constant $C$. (These are not covered by the definition above, which requires $y>0$.)
        \item[$(D_R)$] For $z_0\in\C$ and $R>0$ we write $D_R(z_0):=\{z\in\C:|z-z_0|<R\}$ for the open disc of radius $R$ about $z_0$, and $D_R:=D_R(0)$.
        \end{itemize}
    \end{definition}

\section{The complete classification of causality for gravitational pp-wave spacetimes}\label{s-causality-theorem}

Let $(M,g)$ be a gravitational pp-wave with metric
    \[g=2dudv-H(u,x,y)du^2+dx^2+dy^2\]
as in \cref{s-introduction}, where $H$ is smooth (continuity of $H$ is all that we use) and harmonic in $(x,y)$. Let $\bm\sigma:[t_0,t_1]\to M$ be some curve, and write $\bm\sigma=(\bm u,\bm v,\bm x,\bm y)$. Note that $\bm\sigma$ is a timelike curve if and only if we have $g(\dot{\bm\sigma},\dot{\bm\sigma})<0$. i.e.
    \[
        2\dot{\bm v}\dot{\bm u}<H(\bm u,\bm x,\bm y)\dot{\bm u}^2-\dot{\bm x}^2-\dot{\bm y}^2
        \]
This necessarily implies that either $\dot{\bm u}>0$ everywhere or $\dot{\bm u}<0$ everywhere. We call the first case the future-directed case and the second the past-directed case.

Let
    \[N:=\{u\in\R:H(u,\cdot)\text{ is not a polynomial of degree}\leq2\text{ in }(x,y)\}.\]
Since $H$ is continuous, $N$ is open (if $u_k\to u$ and each $H(u_k,\cdot)$ is a polynomial of degree $\leq2$, then so is the locally uniform limit $H(u,\cdot)$, because these polynomials form a finite-dimensional, hence closed, subspace). By definition, $N$ is nonempty if and only if $(M,g)$ is not a plane wave; by \cref{r-causality}(iii) below, this is the same as $(M,g)$ not being isometric to any plane wave. We would like to show the following:
\begin{theorem}\label{t-causality}
    Let $(M,g)$ be any gravitational pp-wave as above with $N\neq\emptyset$, and let $p=(u_0,v_0,x_0,y_0)\in M$.
    \begin{enumerate}
        \item[(a)] Suppose that $N\cap(u_0,u_0+\eps)\neq\emptyset$ for every $\eps>0$. Then, given any precompact neighborhood $U$ of $p$ and any neighborhood $V\subseteq U$ of $p$, there is some curve $\bm\sigma:[t_0,t_1]\to M$ that satisfies the following:
       \begin{enumerate}
           \item[(1)] $\bm\sigma$ is a future-directed timelike curve.
           \item[(2)] $\bm\sigma(t_0)=p$.
           \item[(3)] $\bm\sigma(t_1)\in V$.
           \item[(4)] The image of $\bm\sigma$ is not entirely contained in $U$.
       \end{enumerate}
        \item[(b)] Suppose that $N\cap(u_0-\eps,u_0)\neq\emptyset$ for every $\eps>0$. Then the same holds with ``future-directed'' replaced by ``past-directed''.
    \end{enumerate}
    In particular, both (a) and (b) apply whenever $u_0\in N$, and such points exist.
\end{theorem}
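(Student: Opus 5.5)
We reduce the existence of a timelike curve to a variational inequality. For a curve with $\dot{\bm u}>0$ we may reparametrize so that $\bm u(t)=t$, $t\in[u_0,u_1]$. The timelike condition then reads $2\dot{\bm v}<H(t,\bm x,\bm y)-\dot{\bm x}^2-\dot{\bm y}^2$. Given any spatial path $(\bm x,\bm y):[u_0,u_1]\to\R^2$ with $(\bm x,\bm y)(u_0)=(x_0,y_0)$, we integrate this to define $\bm v$. The endpoint can be made to have $\bm v(u_1)=v_0+\Delta$ for any
\[\Delta<\tfrac12\bigl(\mathcal H(\gamma)-\mathcal K(\gamma)\bigr),\qquad \mathcal H(\gamma)=\int_{u_0}^{u_1}H(t,\gamma)\,dt,\quad \mathcal K(\gamma)=\int_{u_0}^{u_1}|\dot\gamma|^2\,dt.\]
This is the content of the causality lemma (\cref{causality-lemma}). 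So for (a) it suffices to find, for arbitrarily small $u_1-u_0=\delta$, a loop $\gamma$ based at $(x_0,y_0)$, and ending at $(x_0,y_0)$ or as close to it as we like, which leaves a fixed large ball, i.e.\ the $(x,y)$-projection of $U$, and satisfies $\mathcal H(\gamma)-\mathcal K(\gamma)>-c$ for some small $c$. Since $\bm v$ may be decreased freely, $\bm v(u_1)$ can then be adjusted to lie within $c$ of $v_0$. Together with $u_1-u_0=\delta$ small and the spatial endpoint at $(x_0,y_0)$, this puts $\bm\sigma(u_1)\in V$. Condition (4) holds because $\gamma$ leaves the projection of $U$. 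Part (b) is the time-reversed argument, using $u\mapsto -u$.

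For the loops we use radial excursions. Fix the direction $\theta$, and let $\gamma$ go from $z_0=x_0+iy_0$ out to $z_0+Re^{i\theta}$ and back along a straight segment, with some time profile on $[u_0,u_0+\delta]$. Then $\mathcal K\sim R^2/\delta$, since the excursion takes time $\sim\delta$. The key step is to make $\mathcal H$ beat this. We write $H(t,z_0+re^{i\theta})=\sum_n a_n(t)r^n\cos(n\theta-\phi_n(t))$, the Taylor expansion of the harmonic function as the real part of a holomorphic function. After integrating in $t$ along the path, we view $\mathcal H$ as a function $F(\theta)$ and expand it in Fourier modes in $\theta$. Because $H(t,\cdot)$ is not quadratic for some $t\in N\cap(u_0,u_0+\delta)$, some coefficient with $n\ge3$ is nonzero on a set of positive measure. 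Its Fourier mode $e^{in\theta}$ in $F$ carries a weight like $\int a_n(t)\,r(t)^n e^{-i\phi_n(t)}dt$.

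The main obstacle is to ensure that these time-integrated coefficients do not cancel. We first try to choose the radial profile $r(t)$ concentrated near a time $t_*\in N$. We spend a short time $\tau$ out at radius $R$, with quick transit that costs $\sim R^2/\tau_{\text{transit}}$. The $n$-th mode of $F$ is then $\approx\tau\,\hat a_n(t_*)R^n$, and by continuity in $t$ it does not cancel. Since $F$ has mean given by the $n=0,1,2$ terms, which are bounded by $O(\tau R^2)$, and a nonzero mode of order $n\ge3$, there is a $\theta$ with $F(\theta)\gtrsim\tau R^n-O(\tau R^2)$. For each fixed $n$ only finitely many modes actually matter, because the polynomial part of degree $\le n$ dominates locally. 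If $H(t_*,\cdot)$ is entire of higher degree, we pass to the largest scale where the Fourier mass of $F$ beyond degree $2$ is nonzero, and use $\sup_\theta F\ge$ mean $+$ a positive multiple of the $L^2$ norm of the nonconstant part. Taking $R\to\infty$ with $\tau$ and the transit times fixed then gives $F(\theta)-\mathcal K\to+\infty$. This is where nonpolynomiality beyond degree $2$ is used: the $R^n$ growth with $n\ge3$ beats the quadratic kinetic cost.

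It remains to check that $N$ is open, which gives $t_*$ with $t_*\in N$ arbitrarily close to $u_0$, and that $H(u_0,\cdot)$ may even be quadratic. For the final claim, $u_0\in N$ gives both hypotheses because $N$ is open and nonempty.
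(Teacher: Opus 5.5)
Your strategy is the paper's. You reduce to $\mathcal H-\mathcal K>0$ via \cref{causality-lemma} and use radial excursions $z_0+Re^{i\theta}\chi(u)$ with a fixed profile $\chi\ge0$ supported near some $t_*\in N\cap(u_0,u_0+\delta)$ where $a_n(t_*)\neq0$ for some $n\ge3$. You note that the $n$th Fourier coefficient of $F(\theta)=\mathcal H(z_0+Re^{i\theta}\chi)$ is $\tfrac12R^n\int a_n\chi^n\,du\neq0$ by continuity of $a_n$, and you let $R\to\infty$ against $\mathcal K=R^2\mathcal K(\chi)$.

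The step that does not hold up as written is the passage from ``nonzero $n$th mode'' to ``some $\theta$ with $F(\theta)$ large'' when $\mathcal f(t_*;\cdot)$ has Taylor terms of degree above $n$, for instance whenever it is transcendental. Your first sentence there is true, but the justification you give is not.
\begin{itemize}
\item The claim that ``the polynomial part of degree $\le n$ dominates locally'' is backwards in the regime $R\to\infty$. There the higher Taylor terms dominate pointwise.
\item The inequality $\sup_\theta F\ge\bar F+c\,\|F-\bar F\|_{L^2}$ is false for general continuous $F$. A mean-zero $G$ equal to $-A$ on an arc of length $\epsilon$ and to $A\epsilon/(2\pi-\epsilon)$ elsewhere has $\|G\|_{L^2}\ge A\sqrt\epsilon$, but $\max G=A\epsilon/(2\pi-\epsilon)$, so the ratio tends to $0$.
\item No ``largest scale'' selection is needed.
\end{itemize}

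The correct step uses only orthogonality, so the other modes never matter. Set $\bar F=\frac1{2\pi}\int_0^{2\pi}F\,d\theta$ and $\hat F(n)=\frac1{2\pi}\int_0^{2\pi}Fe^{-in\theta}d\theta$, and choose $\psi$ with $e^{i\psi}\hat F(n)=|\hat F(n)|$. The kernel $1+\cos(n\theta-\psi)$ is nonnegative with mean $1$, so $\max_\theta F\ge\frac1{2\pi}\int_0^{2\pi}F(\theta)\bigl(1+\cos(n\theta-\psi)\bigr)d\theta=\bar F+|\hat F(n)|$. The paper does the equivalent with $L^1$ in place of your $L^2$: it uses $\int F^+=\tfrac12\bigl(\int F+\int|F|\bigr)$ together with $\|F\|_{L^1}\gtrsim|\hat F(n)|$.

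Two smaller points.
\begin{itemize}
\item The mean is not ``given by the $n=0,1,2$ terms.'' Modes $1$ and $2$ average to zero, and by the mean value property $\bar F=\int_{u_0}^{u_1}H(u,z_0)\,du$, which is independent of $R$. Your bound $O(\tau R^2)$ is loose but harmless since $n\ge3$.
\item $t_*$ is supplied directly by the hypothesis of (a). Openness of $N$ is needed only for the final sentence of the theorem.
\end{itemize}

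With the kernel inequality replacing your $L^2$ paragraph, your argument is the paper's proof.
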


\vspace{-1cm}

\begin{remark}\label{r-causality}~\\
    \begin{enumerate}
        \item[(i)] \Cref{t-causality} implies that $(M,g)$ is neither future nor past distinguishing (and in particular not distinguishing). Indeed, a spacetime is future distinguishing at $p$ if and only if for every neighborhood $U$ of $p$ there is a neighborhood $V\subseteq U$ of $p$ such that for every future-directed causal curve $\bm\sigma:[t_0,t_1]\to M$ starting at $p$, the set $\bm\sigma\inv(V)$ is an interval (see e.g. \cite{Roche_2023} and the references therein).

The curve $\bm\sigma$ of \cref{t-causality}(a) starts at $p\in V$, leaves $U\supseteq V$, and returns to $V$, so $\bm\sigma\inv(V)$ is not an interval; so $(M,g)$ is not future distinguishing at any $p$ as in (a), e.g. at any $p$ with $u_0\in N$. The past case is identical, using (b).
        \item[(ii)] Suppose that no neighborhood of $p$ is isometric to a neighborhood of a plane wave spacetime. If $H(u,\cdot)$ were a quadratic polynomial in $(x,y)$ for all $u$ in some open interval $J\ni u_0$, then the open set $\{u\in J\}$ would be a plane wave neighborhood of $p$. Hence $u_0\in\overline N$, so that the hypothesis of (a) or the hypothesis of (b) is satisfied at $p$. Note, however, that it can happen that only one of them is satisfied: if $H(u,\cdot)$ is quadratic for all $u\geq u_0$ but $u_0\in\overline N$, then the hypothesis of (a) fails at $p$, and only (b) applies at $p$. (In this case the conclusion of (a) fails as well: every future-directed causal curve starting at $p$ stays in the region $\{u\geq u_0\}$, where the metric agrees with that of a plane wave, which is strongly causal.)
        \item[(iii)] Plane waves are strongly causal, and in particular future and past distinguishing (this is the case of at most quadratic growth in \cite{Flores_Sanchez_2003}), and an isometry either preserves or exchanges the two time orientations. Hence (i) shows that a gravitational pp-wave with $N\neq\emptyset$ is not isometric to any plane wave. (This can also be seen from a curvature computation, cf.\ \cite{Cairo_EK}, but we do not need this.) In particular, the notion of a plane wave introduced in \cref{s-introduction} does not depend on the choice of coordinates: if a gravitational pp-wave is isometric to a plane wave, then $N=\emptyset$.
    \end{enumerate}
\end{remark}

Part (b) follows from part (a) applied to the pp-wave with $H(u,x,y)$ replaced by $H(-u,x,y)$, since $(u,v,x,y)\mapsto(-u,-v,x,y)$ is an isometry between the two which exchanges future-directed and past-directed curves. So we only need to prove (a). To do so, we note that there are some simplifications that we can make. First, since $\dot{\bm u}>0$ everywhere, we may reparameterize the curve by $u=\bm u(t)$. In general, we write $\bm\sigma(u)=(u,\bm v(u),\gamma(u))$, for some function $\gamma$. As it turns out, the function $\bm v(u)$ will not be very important, as \cref{causality-lemma} below shows. We first define the relevant functionals. Throughout this section we identify $\R^2$ with $\C$, and we write $H(u,x,y)=\Re\p{\mathcal f(u;x+iy)}$ for a function $\mathcal f(u;z)$ which is entire in $z$ for each $u$ and continuous in $(u,z)$; this is possible since $H(u,\cdot)$ is harmonic.

\begin{definition}
    Let $\gamma:[u_0,u_1]\to\R^2$ be any (piecewise $C^1$) curve. We define the following functionals:
        \[\aline
            \mathcal I(\gamma)&=\int_{u_0}^{u_1}\mathcal f(u;\gamma(u))du\\
            \mathcal H(\gamma)&=\int_{u_0}^{u_1}H(u,\gamma(u))du\\
            \mathcal K(\gamma)&=\int_{u_0}^{u_1}|\dot\gamma(u)|^2du
            \]
    (The functional $\mathcal I$ will only be used in the proof of \cref{causality-main-lemma}; note that $\mathcal H=\Re\p{\mathcal I}$.)
\end{definition}

\begin{lemma}\label{causality-lemma}
    Let $\gamma:[u_0,u_1]\to\R^2$ with $\gamma(u_0)=(x_0,y_0)$ and $\gamma(u_1)=(x_1,y_1)$. Then there exists a future-directed timelike curve from $(u_0,v_0,x_0,y_0)$ to $(u_1,v_1,x_1,y_1)$ whose $(u,x,y)$-projection is a reparameterization of $\gamma$ iff $u_0<u_1$ and
        \[2(v_1-v_0)<\mathcal H(\gamma)-\mathcal K(\gamma)\]
\end{lemma}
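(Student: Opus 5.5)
We reparameterize by $u$ and reduce the statement to a pointwise inequality for $\dot{\bm v}$, which is then integrated.

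\emph{Necessity.} Let $\bm\sigma$ be a future-directed timelike curve from $(u_0,v_0,x_0,y_0)$ to $(u_1,v_1,x_1,y_1)$ whose $(u,x,y)$-projection reparameterizes $\gamma$. By the discussion preceding the definition of the functionals, $\dot{\bm u}>0$ along $\bm\sigma$. Hence $u_0<u_1$, and we may reparameterize $\bm\sigma$ by $u$, writing $\bm\sigma(u)=(u,\bm v(u),\gamma(u))$ after matching the parameterization of $\gamma$. With $\dot{\bm u}=1$, the timelike condition $2\dot{\bm v}\dot{\bm u}<H\dot{\bm u}^2-\dot{\bm x}^2-\dot{\bm y}^2$ becomes
\[
2\dot{\bm v}(u)<H(u,\gamma(u))-|\dot\gamma(u)|^2
\]
for every $u$ (one-sided derivatives at the finitely many break points). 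Integrating over $[u_0,u_1]$ gives $2(v_1-v_0)<\mathcal H(\gamma)-\mathcal K(\gamma)$. The inequality is strict because the integrand of the difference is a strictly positive piecewise continuous function on an interval of positive length.

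One point needs care: the projection is only a reparameterization of $\gamma$, so we must check that the $u$-parameterization of $\bm\sigma$ agrees with the given parameterization of $\gamma$. I would make the convention explicit: $\gamma$ is parameterized by $u$, as in the definition of the functionals. Then the projection of $\bm\sigma$ is $t\mapsto(\bm u(t),\gamma(\bm u(t)))$, and reparameterizing by $u$ gives exactly $(u,\gamma(u))$. Note that $\mathcal K$ is not reparameterization invariant, which is why this convention matters.

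\emph{Sufficiency.} Suppose $u_0<u_1$ and set $\delta:=\mathcal H(\gamma)-\mathcal K(\gamma)-2(v_1-v_0)>0$. Define
\[
\bm v(u):=v_0+\tfrac12\int_{u_0}^{u}\Bigl(H(s,\gamma(s))-|\dot\gamma(s)|^2\Bigr)ds-\frac{\delta}{2}\cdot\frac{u-u_0}{u_1-u_0}.
\]
Then $\bm v(u_1)=v_1$, and $2\dot{\bm v}=H-|\dot\gamma|^2-\delta/(u_1-u_0)<H-|\dot\gamma|^2$ wherever $\gamma$ is $C^1$. Therefore $\bm\sigma(u)=(u,\bm v(u),\gamma(u))$ is a piecewise $C^1$ timelike curve with $\dot{\bm u}=1>0$, so it is future-directed by the time orientation used above, and it has the required endpoints.

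The only technicalities are the piecewise-$C^1$ regularity and the strictness of the inequalities. Piecewise regularity is harmless: at break points the one-sided tangents are both timelike and future-directed, which is the standard convention for piecewise smooth timelike curves. The main obstacle, if any, is the parameterization convention discussed under necessity, not the analysis.
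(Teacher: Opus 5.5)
Your proposal is correct and follows the paper's proof: you reparameterize by $u$ and integrate the pointwise timelike inequality $2\bm v'<H-|\gamma'|^2$ for necessity, and for sufficiency you take $v_0+\tfrac12\int(H-|\gamma'|^2)$ minus a linear correction chosen so that $\bm v(u_1)=v_1$, exactly as the paper does. Your remark that $\gamma$ must be read as parameterized by $u$ (since $\mathcal K$ is not reparameterization invariant) makes explicit a convention the paper leaves implicit.
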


\begin{proof}
    Parameterizing by $u$, the timelike condition reads $2\bm v'(u)<H(u,\gamma(u))-|\gamma'(u)|^2$. If such a $\bm v$ exists then integrating gives the inequality; conversely, if the inequality holds, then $\bm v(u):=v_0+\tfrac12\int_{u_0}^u\big(H(s,\gamma(s))-|\gamma'(s)|^2\big)ds-\delta(u-u_0)$ works for a suitable $\delta>0$ (namely the one for which $\bm v(u_1)=v_1$).
\end{proof}

To show \cref{t-causality}, we will use dilated and rotated copies of a fixed closed curve. For a closed curve $\gamma$ based at $z_0=x_0+iy_0$, and for $R>0$ and $\theta\in\R$, we denote by
    \[\gamma^{R,\theta}:=z_0+Re^{i\theta}(\gamma-z_0)\]
the curve obtained by dilating and rotating $\gamma$ about its base point. The key step is the following.

\begin{lemma}\label{causality-main-lemma}
    Let $[u_0,u_1]$ be an interval such that $H(u,\cdot)$ is not a quadratic polynomial in $(x,y)$ for some $u\in[u_0,u_1]$, and let $z_0=x_0+iy_0\in\C$; equivalently (writing $\mathcal f(u;z)=\sum_ma_m(u)(z-z_0)^m$), some coefficient $a_n$ with $n\geq3$ does not vanish identically on $[u_0,u_1]$. Fix such an $n$. Then there exists a smooth closed curve $\gamma:[u_0,u_1]\to\R^2$ with $\gamma(u_0)=\gamma(u_1)=(x_0,y_0)$ such that, for any $R\gg1$ sufficiently large (depending on $\gamma$), there exists some $\theta\in[0,2\pi)$ so that
        \[\aline
            \mathcal H(\gamma^{R,\theta})&\gtrsim_\gamma R^n\\
            \mathcal K(\gamma^{R,\theta})&\lesssim_\gamma R^2
            \]
\end{lemma}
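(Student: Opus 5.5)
The plan is to expand $\mathcal I(\gamma^{R,\theta})$ as a Fourier series in $\theta$ and read off the $e^{in\theta}$ coefficient. Write $\gamma(u)=z_0+w(u)$ with $w(u_0)=w(u_1)=0$. Then
\[
\mathcal I(\gamma^{R,\theta})=\sum_{m\geq0}R^me^{im\theta}c_m,\qquad c_m:=\int_{u_0}^{u_1}a_m(u)w(u)^m\,du,
\]
where the interchange of sum and integral is justified by locally uniform convergence of the Taylor series on the compact set $[u_0,u_1]\times\overline{D_{R\rho}(z_0)}$, with $\rho=\max|w|$. Since $\mathcal H=\Re\mathcal I$, we get
\[
\frac1{2\pi}\int_0^{2\pi}\mathcal H(\gamma^{R,\theta})e^{-in\theta}\,d\theta=\tfrac12R^nc_n .
\]
It follows that $\sup_\theta\mathcal H(\gamma^{R,\theta})\geq\frac14R^n|c_n|$. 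More carefully, $\mathcal H(\gamma^{R,\theta})$ has mean $\Re c_0$, and the $L^1$ norm of a real function controls its Fourier coefficients. So if $\mathcal H(\gamma^{R,\theta})\leq\frac{1}{8}R^n|c_n|$ for all $\theta$, then the positive part is small, and the $L^1$ norm is at most $2\pi\cdot 2\cdot\frac18 R^n|c_n|+2\pi|\Re c_0|$. This would contradict $\frac12R^n|c_n|\leq\frac1{2\pi}\|\mathcal H\|_{L^1}$ once $R^n|c_n|\gg|c_0|$. Here $c_0=\int a_0\,du$ is independent of $R$, which is why we need $R\gg1$ depending on $\gamma$. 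This yields some $\theta$ with $\mathcal H(\gamma^{R,\theta})\gtrsim_\gamma R^n$, provided $c_n\neq0$.

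The kinetic bound is trivial: $\mathcal K(\gamma^{R,\theta})=R^2\mathcal K(\gamma)$, which is independent of $\theta$.

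The main obstacle is therefore choosing a smooth closed loop $w$ with $w(u_0)=w(u_1)=0$ and $c_n=\int a_n(u)w(u)^n\,du\neq0$, where $a_n$ is merely continuous and not identically zero. My first attempt is a localized bump. Pick $u_*\in(u_0,u_1)$ with $a_n(u_*)\neq0$; this is possible by continuity even if the nonvanishing point is an endpoint. Take $w=\varepsilon\phi(u)\zeta$, where $\phi\geq0$ is a smooth bump supported in a small interval $J\ni u_*$ on which $a_n$ stays within a small angle of $a_n(u_*)$, and $\zeta\in\C$ has modulus one. Then
\[
c_n=\varepsilon^n\zeta^n\int_J a_n\phi^n\,du,
\]
and the integral is nonzero because $\Re\big(\overline{a_n(u_*)}a_n(u)\big)>0$ on $J$. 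Note that the rotation $\theta$ already absorbs the phase, so $\zeta$ is irrelevant and the loop can even be a radial excursion out and back.

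Strictly speaking, this gives $|c_n|>0$ for our fixed $n$, with $n\geq3$ playing no role in this lemma beyond ensuring $R^n\gg R^2$ later. Smoothness of $\gamma$ at the endpoints holds because $\phi$ vanishes near $u_0$ and $u_1$.
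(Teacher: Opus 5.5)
Your proposal is correct and follows the paper's proof essentially step for step. You read the Fourier coefficients of $\theta\mapsto\mathcal H(\gamma^{R,\theta})$ off the Taylor expansion (the paper uses Cauchy's integral formula, you integrate the series term by term), run the same $L^1$/positive-part argument against the $R$-independent mean $\Re c_0$, and choose the same real bump supported where $a_n$ stays close to $a_n(u_*)$; the only blemish is your preliminary claim $\sup_\theta\mathcal H\geq\frac14R^n|c_n|$, which holds only up to an $O(|c_0|)$ error, but your ``more careful'' contradiction argument correctly gives $\frac18R^n|c_n|$ for $R$ large.
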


To see that \cref{causality-main-lemma} implies \cref{t-causality}(a), let $p=(u_0,v_0,x_0,y_0)$ be as in (a). Then every interval $[u_0,u_1]$ with $u_1>u_0$ contains some $u$ for which $H(u,\cdot)$ is not quadratic. We apply \cref{causality-main-lemma} on $[u_0,u_1]$ with $u_1-u_0$ so small that $(u_1,v_0,x_0,y_0)\in V$, and then \cref{causality-lemma} with $v_1=v_0$ to the curve $\gamma^{R,\theta}$ (this is permitted since $\mathcal H(\gamma^{R,\theta})-\mathcal K(\gamma^{R,\theta})\gtrsim_\gamma R^n-O_\gamma(R^2)>0$ for $R$ large, as $n\geq3$). We obtain a future-directed timelike curve from $p$ to $(u_1,v_0,x_0,y_0)\in V$ whose $(x,y)$-projection is $\gamma^{R,\theta}$. Since $\gamma$ is not constant, this projection leaves any given compact subset of $\R^2$ once $R$ is large, so the curve leaves the precompact set $U$. This proves \cref{t-causality}(a).

The argument is illustrated in \cref{fig-causality}.

\begin{figure}[htbp]
    \centering
    \includegraphics[width=\textwidth]{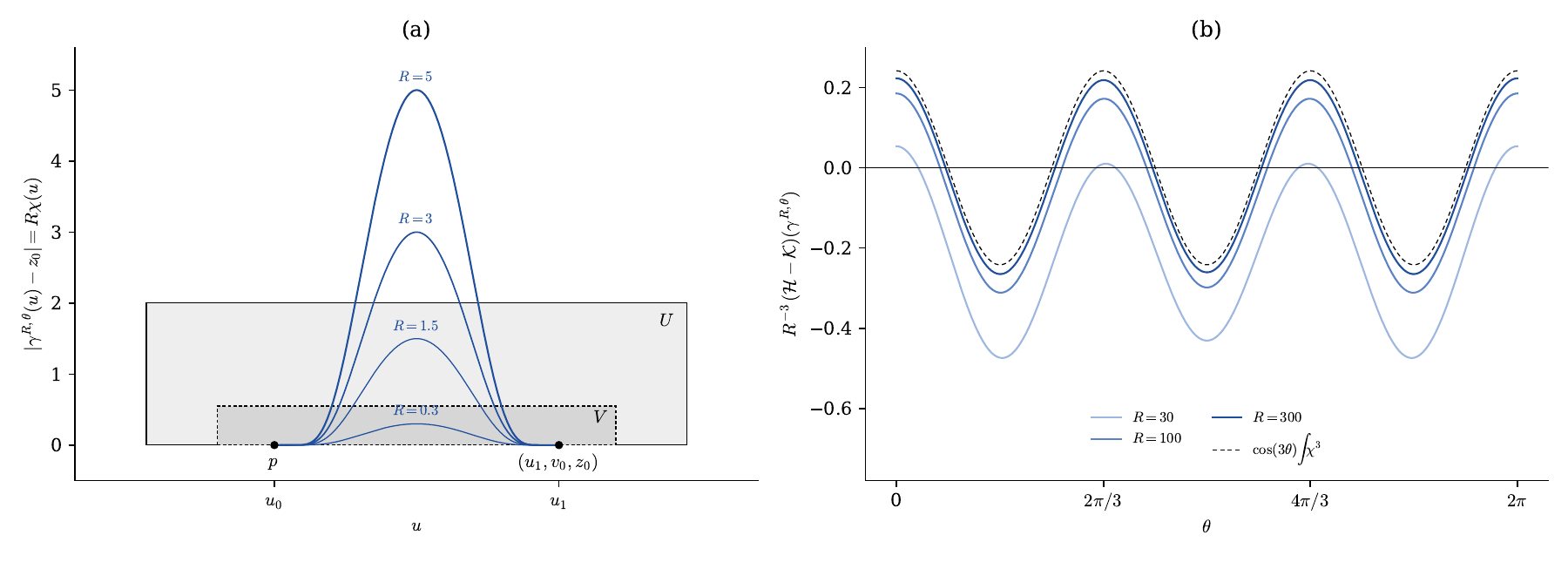}
    \caption{(a) A schematic picture of the curves $\gamma^{R,\theta}$ for $\gamma=\chi\geq0$ a bump function (as in the proof of \cref{causality-main-lemma}): the curve makes a radial excursion of size $\sim R$ in the direction $e^{i\theta}$ and returns to $z_0$ at time $u_1$. For $R$ large it leaves $U$, but its endpoints remain in $V$. (b) The function $\theta\mapsto R^{-3}(\mathcal H-\mathcal K)(\gamma^{R,\theta})$ for the example $\mathcal f(z)=z^3+3z^2+z$ (so $H=\Re\p{\mathcal f}$ and $n=3$), $z_0=0$, $[u_0,u_1]=[0,1]$ and $\chi(u)=e^{4-1/(u(1-u))}$. As $R\to\infty$ it converges to the $n$th Fourier mode $\cos(3\theta)\int\chi^3$; wherever it is positive, \cref{causality-lemma} yields a timelike curve over $\gamma^{R,\theta}$ with $v_1=v_0$. In this example that first happens (near $\theta=0$) for $R\approx23.3$, and for $R\gtrsim28.7$ it happens near all three maxima of $\cos(3\theta)$.}
    \label{fig-causality}
\end{figure}

\begin{proof}[Proof of \cref{causality-main-lemma}]
    The statement about $\mathcal K$ is trivial, since $\mathcal K(\gamma^{R,\theta})=R^2\mathcal K(\gamma)$. To show the one about $\mathcal H$, we assume $z_0=0$ without loss of generality (replace $\mathcal f(u;z)$ by $\mathcal f(u;z+z_0)$), so that $\gamma^{R,\theta}=Re^{i\theta}\gamma$, and write
        \[\mathcal f(u;z)=\sum_{m=0}^\infty a_m(u)z^m\]
    which converges for all $z$ since $\mathcal f(u;z)$ is holomorphic on all of $\C$ for any $u$; the coefficients $a_m(u)=\frac1{2\pi i}\int_{\partial D_1}\mathcal f(u;\xi)\xi^{-m-1}d\xi$ are continuous in $u$. Recall that $\mathcal H=\Re\p{\mathcal I}$. The idea is to consider $\theta\mapsto\mathcal H(Re^{i\theta}\gamma)$ as a function on the circle, and to show that its $n$th Fourier coefficient is $\tfrac12R^n\int a_n\gamma^n$ while its mean does not depend on $R$. For a suitable choice of $\gamma$ the former does not vanish, and this forces $\max_\theta\mathcal H(Re^{i\theta}\gamma)\gtrsim_\gamma R^n-O_\gamma(1)$.

    For any $\gamma$, let $I_\gamma(\theta)=\mathcal I(e^{i\theta}\gamma)$ and also write $\gamma(u)=\rho(u)e^{i\alpha(u)}$. For $m\in\Z$, the Fourier coefficients of $I_\gamma$ are
        \begin{align}
            \hat I_\gamma(m)
                &=\frac1{2\pi}\int_0^{2\pi}\int_{u_0}^{u_1}\mathcal f(u;e^{i\theta}\gamma(u))e^{-im\theta}dud\theta\\
                &=\frac1{2\pi i}\int_{u_0}^{u_1}\int_{S^1}\frac{\mathcal f(u;\rho(u)e^{i\alpha(u)}\zeta)}{\zeta^{m+1}}d\zeta du\\
                &=\frac1{2\pi i}\int_{u_0}^{u_1}(\gamma(u))^m\int_{\partial D_{\rho(u)}}\frac{\mathcal f(u;\xi)}{\xi^{m+1}}d\xi du\\
                &=\frac1{m!}\int_{u_0}^{u_1}(\gamma(u))^m\partial_z^m\mathcal f(u;0)du\\
                &=\int_{u_0}^{u_1}a_m(u)(\gamma(u))^mdu
            \end{align}
    for $m\geq0$ (where $\gamma(u)=0$, the inner integrals are to be read as $a_0(u)$ if $m=0$ and as $0$ otherwise), and the same computation shows that $\hat I_\gamma(m)=0$ whenever $m<0$.

    Ultimately, this comes from the fact that Cauchy's integral formula is a Fourier transform in disguise: the Fourier coefficients of the restriction of an entire function to a circle are, up to scaling, its Taylor coefficients, and those of negative order vanish.

    Note that $H_\gamma(\theta):=\mathcal H(e^{i\theta}\gamma)=\Re\p{I_\gamma(\theta)}$ satisfies
        \[\label{fourier-coefficients-H}
            \hat H_\gamma(m)=\frac{\hat I_\gamma(m)+\overline{\hat I_\gamma(-m)}}2=
                \begin{cases}
                \tfrac12\hat I_\gamma(m),& \text{if $m>0$}\\
                \Re\hat I_\gamma(0),& \text{if $m=0$}
                \end{cases}
            \]
    By the triangle inequality, $\pl H_\gamma__1\gtrsim\pl\hat H_\gamma__\infty$, so for every $m>0$, we have
        \[\label{H-bound-L1}
            \int_0^{2\pi}|H_\gamma(\theta)|d\theta\gtrsim|\hat I_\gamma(m)|=\abs{\int_{u_0}^{u_1}a_m(u)(\gamma(u))^mdu}
            \]
    Since $\int_0^{2\pi}H_\gamma(\theta)d\theta=2\pi\Re\p{\hat I_\gamma(0)}=2\pi\Re\p{\int_{u_0}^{u_1}a_0(u)du}$, we deduce that
        \[\label{H-int-positive}
            \int_0^{2\pi}H_\gamma^+d\theta=\tfrac12\p{\int_0^{2\pi}H_\gamma d\theta+\int_0^{2\pi}|H_\gamma|d\theta}\gtrsim|\hat I_\gamma(m)|-O(|\hat I_\gamma(0)|)\]
    where $h^+:=\frac{h+|h|}2$ denotes the positive part of a real-valued function $h$.

    We now take $m=n$. Note that $\hat I_{R\gamma}(n)=R^n\hat I_\gamma(n)$, while $\hat I_{R\gamma}(0)=\int_{u_0}^{u_1}a_0(u)du$ does not depend on $R$ or $\gamma$. It follows that
        \[|\hat I_{R\gamma}(n)|\sim_{\gamma}R^n\]
    provided that
        \[\label{e-gamma-nondegenerate}
            \int_{u_0}^{u_1}a_n(u)(\gamma(u))^ndu\neq0\]
    It remains to choose $\gamma$ so that \cref{e-gamma-nondegenerate} holds. By assumption, $a_n$ is not identically zero on $[u_0,u_1]$ (if $\mathcal f(u;\cdot)$ is not quadratic for some $u\in[u_0,u_1]$, then $a_m(u)\neq0$ for some $m\geq3$; this is the equivalence claimed in the statement). Since $a_n$ is continuous, there is some $u_*\in(u_0,u_1)$ and a compact interval $J\subset(u_0,u_1)$ with nonempty interior containing $u_*$ such that $|a_n(u)-a_n(u_*)|\leq\frac12|a_n(u_*)|$ for $u\in J$, and $a_n(u_*)\neq0$. Let $\gamma(u):=\chi(u)\geq0$ be a smooth real-valued bump function, not identically zero, which is supported in $J$. Then $\gamma(u_0)=\gamma(u_1)=0$ and
        \[\abs{\int_{u_0}^{u_1}a_n\gamma^ndu-a_n(u_*)\int_J\chi^ndu}\leq\tfrac12|a_n(u_*)|\int_J\chi^ndu,\]
    so \cref{e-gamma-nondegenerate} holds. (In fact, \cref{e-gamma-nondegenerate} holds for $\gamma$ in an open and dense subset of the closed curves based at $0$: the left-hand side is a continuous homogeneous polynomial in $\gamma$ which, as we have just seen, does not vanish identically.)

    Finally, let $\theta_R$ be a value of $\theta\in[0,2\pi)$ which maximizes $H_{R\gamma}(\theta)$. Since $2\pi\max_\theta H_{R\gamma}\geq\int_0^{2\pi}H_{R\gamma}^+d\theta$, \cref{H-int-positive} gives
        \[\mathcal H(Re^{i\theta_R}\gamma)=H_{R\gamma}(\theta_R)\gtrsim_{\gamma} R^n-O(1)\gtrsim_\gamma R^n\]
    for $R$ sufficiently large. This completes the proof.
    \end{proof}

\section{Proof of \texorpdfstring{\cref{t-conj-flores-silva}}{Theorem \ref{t-conj-flores-silva}}}\label{ss-anderson}

Let $(M,g)$ be a geodesically complete, transversally complete, strongly causal, Ricci-flat Brinkmann spacetime of dimension $4$, with parallel null field $X$ and conjugate complete field $Y$, and let $\pi:(\tilde M,\tilde g)\to(M,g)$ be its universal cover, with the pulled back metric and time orientation. Then $(\tilde M,\tilde g)$ is again a Brinkmann spacetime which is geodesically complete, transversally complete and Ricci-flat: geodesics and integral curves of $X$ and $Y$ lift to $\tilde M$, so the lifts $\tilde X$ and $\tilde Y$ of $X$ and $Y$ are complete, and the identities $\tilde\nabla\tilde X=0$, $\tilde g(\tilde X,\tilde Y)=1$, $[\tilde X,\tilde Y]=0$ and $\widetilde{\operatorname{Ric}}=0$ are local. Moreover, $(\tilde M,\tilde g)$ is strongly causal. Indeed, let $\tilde p\in\tilde M$ and let $\tilde U$ be any neighborhood of $\tilde p$; shrinking $\tilde U$, we may assume that $\pi$ maps $\tilde U$ diffeomorphically onto $U:=\pi(\tilde U)$ and that $\tilde U$ is a connected component of $\pi\inv(U)$. By strong causality of $(M,g)$ at $\pi(\tilde p)$, there is a neighborhood $V\subseteq U$ of $\pi(\tilde p)$ such that every causal curve of $M$ with both endpoints in $V$ is contained in $U$. If $\tilde\sigma$ is a causal curve of $\tilde M$ with both endpoints in $\tilde V:=\tilde U\cap\pi\inv(V)$, then $\pi\circ\tilde\sigma$ is a causal curve of $M$ with endpoints in $V$, so it is contained in $U$; hence $\tilde\sigma$ is a connected subset of $\pi\inv(U)$ which meets the component $\tilde U$, so $\tilde\sigma\subseteq\tilde U$. This shows that $(\tilde M,\tilde g)$ is strongly causal at $\tilde p$.

By \cite[Theorem~2.1]{silva2016rigiditygeodesiccompletenessbrinkmann} (which is \cite[Theorem~V.11]{silva2016splittingproblemlorentzianmanifolds}), applied to the strongly causal and transversally complete spacetime $(M,g)$, the universal cover $(\tilde M,\tilde g)$ is isometric to a standard Brinkmann spacetime in the sense of \cite[Section~2]{silva2016rigiditygeodesiccompletenessbrinkmann}. Since $\tilde M$ is simply connected, $4$-dimensional, geodesically complete and Ricci-flat, \cite[Theorem~3.1]{silva2016rigiditygeodesiccompletenessbrinkmann} shows that $(\tilde M,\tilde g)$ is isometric to a pp-wave $(\R^4,g)$ of the form \cref{e-pp-wave} with $H$ harmonic in $(x,y)$, i.e. to a gravitational pp-wave. This gravitational pp-wave is strongly causal, and in particular distinguishing, so it is a plane wave by \cref{t-not-distinguishing}. Finally, $(M,g)$ is isometric to the quotient of $(\tilde M,\tilde g)$ by the group of deck transformations, which acts freely and properly discontinuously by isometries. This proves \cref{t-conj-flores-silva}.\qed

\begin{remark}\label{r-completeness}
    Geodesic completeness of $(M,g)$ is used above only through \cite[Theorem~3.1]{silva2016rigiditygeodesiccompletenessbrinkmann}, where it guarantees that the universal cover is a pp-wave on all of $\R^4$ (rather than on $\R^2\times Q$ for a possibly incomplete flat surface $Q$). For pp-waves in the sense of \cref{e-pp-wave}, no completeness assumption is needed, by \cref{t-not-distinguishing}. Some completeness assumption on the wave fronts is nevertheless necessary. Indeed, let $D\subset\R^2$ be the open unit disk and let $M=\R^2\times D$ with
        \[g=2dudv-\Re\p{(x+iy)^3}du^2+dx^2+dy^2.\]
    This is a Ricci-flat Brinkmann spacetime of dimension $4$; it is transversally complete, with $X=\partial_v$ and $Y=\partial_u$; and it is simply connected but not locally isometric to a plane wave near any point, since $\mathcal f(z)=z^3$ has $\mathcal f'''\neq0$ (cf.\ \cite{Cairo_EK}). It is, however, strongly causal, because $H$ is bounded on $D$: if $|H|\leq C$ and $\bm\sigma$ is a future-directed causal curve parameterized by $u\in[u_0,u_0+\delta]$, then $2\bm v'\leq H-|\gamma'|^2\leq C$, so $\bm v$ can never rise more than $C\delta/2$ above $\bm v(u_0)$, and if it ever drops by more than $C\delta$ it cannot return; hence a causal curve with both endpoints near $p=\bm\sigma(u_0)$ satisfies $|\bm v-v_0|\lesssim C\delta$, and then $\int|\gamma'|^2du\leq\int H\,du-2(\bm v(u_0+\delta)-v_0)\lesssim C\delta$, so that its $(x,y)$-excursion is $O(\delta)$ by the Cauchy--Schwarz inequality. (This is the mechanism behind the ``at most quadratic growth'' criterion of \cite{Flores_Sanchez_2003}; completeness of the wave fronts enters there only in the global hyperbolicity statement. See also \cite[Proposition~1.1]{Minguzzi_2012}, where a potential which is bounded above is shown to give a time function, hence stable causality.) Thus $(M,g)$ satisfies all hypotheses of \cref{t-conj-flores-silva} except geodesic completeness, and is not a quotient of a plane wave; the integral curves of $X$ and $Y$ are complete, so incompleteness of the wave fronts alone suffices to break the theorem.
\end{remark}

\printbibliography

@misc{Cairo_EK,
  author        = {Cairo, Hannah},
  title         = {On the completeness of gravitational pp-wave spacetimes: a counterexample to the {E}hlers--{K}undt conjecture},
  year          = {2026},
  note          = {Preprint},
  eprint        = {XXXX.XXXXX},
  archivePrefix = {arXiv}
}

@article{Brinkmann_1925,
  author  = {Brinkmann, H. W.},
  title   = {{E}instein spaces which are mapped conformally on each other},
  journal = {Mathematische Annalen},
  volume  = {94},
  number  = {1},
  pages   = {119--145},
  year    = {1925},
  doi     = {10.1007/BF01208647}
}

@incollection{Ehlers_Kundt_1962,
  author    = {Ehlers, J{\"u}rgen and Kundt, Wolfgang},
  title     = {Exact solutions of the gravitational field equations},
  booktitle = {Gravitation: an introduction to current research},
  editor    = {Witten, Louis},
  publisher = {John Wiley \& Sons},
  address   = {New York},
  year      = {1962},
  pages     = {49--101}
}

@article{Roche_2023,
  author  = {Roche, Cian and Aazami, Amir Babak and Cederbaum, Carla},
  title   = {Exact parallel waves in general relativity},
  journal = {General Relativity and Gravitation},
  volume  = {55},
  number  = {2},
  pages   = {40},
  year    = {2023},
  doi     = {10.1007/s10714-023-03083-x},
  eprint  = {2207.03591},
  archivePrefix = {arXiv}
}

@article{silva2016rigiditygeodesiccompletenessbrinkmann,
  author  = {Costa e Silva, Ivan P. and Flores, Jos{\'e} Luis and Herrera, J{\'o}natan},
  title   = {Rigidity of geodesic completeness in the {B}rinkmann class of gravitational wave spacetimes},
  journal = {Advances in Theoretical and Mathematical Physics},
  volume  = {22},
  number  = {1},
  pages   = {25--45},
  year    = {2018},
  doi     = {10.4310/ATMP.2018.v22.n1.a2},
  eprint  = {1605.03619},
  archivePrefix = {arXiv}
}

@article{silva2016splittingproblemlorentzianmanifolds,
  author  = {Costa e Silva, Ivan P. and Flores, Jos{\'e} Luis},
  title   = {On the splitting problem for {L}orentzian manifolds with an {$\mathbb{R}$}-action with causal orbits},
  journal = {Annales Henri Poincar{\'e}},
  volume  = {18},
  number  = {5},
  pages   = {1635--1670},
  year    = {2017},
  doi     = {10.1007/s00023-017-0551-8},
  eprint  = {1605.02345},
  archivePrefix = {arXiv}
}

@article{Anderson_2000,
  author  = {Anderson, Michael T.},
  title   = {On stationary vacuum solutions to the {E}instein equations},
  journal = {Annales Henri Poincar{\'e}},
  volume  = {1},
  number  = {5},
  pages   = {977--994},
  year    = {2000}
}

@article{Penrose1965,
  author  = {Penrose, Roger},
  title   = {A remarkable property of plane waves in general relativity},
  journal = {Reviews of Modern Physics},
  volume  = {37},
  pages   = {215--220},
  year    = {1965},
  doi     = {10.1103/RevModPhys.37.215}
}

@article{1992RvMaP...4..163E,
  author  = {Ehrlich, Paul E. and Emch, G{\'e}rard G.},
  title   = {Gravitational waves and causality},
  journal = {Reviews in Mathematical Physics},
  volume  = {4},
  number  = {2},
  pages   = {163--221},
  year    = {1992},
  doi     = {10.1142/S0129055X92000066},
  note    = {Erratum: Rev. Math. Phys. 4 (1992), 501}
}

@article{Flores_Sanchez_2003,
  author  = {Flores, Jos{\'e} L. and S{\'a}nchez, Miguel},
  title   = {Causality and conjugate points in general plane waves},
  journal = {Classical and Quantum Gravity},
  volume  = {20},
  number  = {11},
  pages   = {2275--2291},
  year    = {2003},
  doi     = {10.1088/0264-9381/20/11/322},
  eprint  = {gr-qc/0211086},
  archivePrefix = {arXiv}
}

@incollection{Flores-on-the-geometry-of-pp-wave-spacetimes,
  author    = {Flores, Jos{\'e} L. and S{\'a}nchez, Miguel},
  title     = {On the geometry of pp-wave type spacetimes},
  booktitle = {Analytical and numerical approaches to mathematical relativity},
  editor    = {Frauendiener, J{\"o}rg and Giulini, Domenico J. W. and Perlick, Volker},
  series    = {Lecture Notes in Physics},
  volume    = {692},
  publisher = {Springer},
  address   = {Berlin},
  year      = {2006},
  pages     = {79--98},
  doi       = {10.1007/3-540-33484-X_4},
  eprint    = {gr-qc/0410006},
  archivePrefix = {arXiv}
}

@article{Flores_Sanchez_2008,
  author  = {Flores, Jos{\'e} L. and S{\'a}nchez, Miguel},
  title   = {The causal boundary of wave-type spacetimes},
  journal = {Journal of High Energy Physics},
  volume  = {2008},
  number  = {03},
  pages   = {036},
  year    = {2008},
  eprint  = {0712.0592},
  archivePrefix = {arXiv}
}

@article{Hubeny_Rangamani_2002,
  author  = {Hubeny, Veronika E. and Rangamani, Mukund},
  title   = {Causal structures of pp-waves},
  journal = {Journal of High Energy Physics},
  volume  = {2002},
  number  = {12},
  pages   = {043},
  year    = {2002},
  eprint  = {hep-th/0211195},
  archivePrefix = {arXiv}
}

@article{Hubeny_Rangamani_Ross_2004,
  author  = {Hubeny, Veronika E. and Rangamani, Mukund and Ross, Simon F.},
  title   = {Causal inheritance in plane wave quotients},
  journal = {Physical Review D},
  volume  = {69},
  pages   = {024007},
  year    = {2004},
  doi     = {10.1103/PhysRevD.69.024007},
  eprint  = {hep-th/0307257},
  archivePrefix = {arXiv}
}

@article{Hubeny_Rangamani_Ross_2005,
  author  = {Hubeny, Veronika E. and Rangamani, Mukund and Ross, Simon F.},
  title   = {Causally pathological spacetimes are physically relevant},
  journal = {International Journal of Modern Physics D},
  volume  = {14},
  number  = {12},
  pages   = {2227--2232},
  year    = {2005},
  doi     = {10.1142/S0218271805007760},
  eprint  = {gr-qc/0504013},
  archivePrefix = {arXiv}
}

@article{GarciaParrado_Senovilla_2005,
  author  = {Garc{\'\i}a-Parrado, Alfonso and Senovilla, Jos{\'e} M. M.},
  title   = {Causal structures and causal boundaries},
  journal = {Classical and Quantum Gravity},
  volume  = {22},
  number  = {9},
  pages   = {R1--R84},
  year    = {2005},
  doi     = {10.1088/0264-9381/22/9/R01},
  eprint  = {gr-qc/0501069},
  archivePrefix = {arXiv}
}

@misc{Minguzzi_2012,
  author        = {Minguzzi, Ettore},
  title         = {Causality of spacetimes admitting a parallel null vector and weak {KAM} theory},
  year          = {2012},
  eprint        = {1211.2685},
  archivePrefix = {arXiv},
  primaryClass  = {gr-qc}
}
\end{document}